\newtheorem{theorem}{Theorem}[section]
\newtheorem{lemma}[theorem]{Lemma}
\newtheorem{prop}[theorem]{Proposition}
\newtheorem{example}{Example}[section]
\newcommand{\gaussm}[3]{\genfrac{[}{]}{0pt}{}{#1}{#2}_{#3}}
\newcommand{\supp}{\operatorname{supp}}
\newcommand{\wt}{\operatorname{wt}}
\begin{document}

\title{\textbf{On the minimum number of minimal codewords}}
\author{Romar dela Cruz\textsuperscript{1}, Michael Kiermaier\textsuperscript{2}, Sascha Kurz\textsuperscript{2} and Alfred Wassermann\textsuperscript{2}\\\\
\small{\textsuperscript{1}Institute of Mathematics, University of the Philippines Diliman, Philippines}\\
\small{\textsuperscript{2}Department of Mathematics, University of Bayreuth, Germany}}
\date{}

\maketitle

\begin{abstract}
We study the minimum number of minimal codewords in linear codes from the point of view of projective geometry. We derive bounds and in some cases determine the exact values.  We also present an extension to minimal subcode supports. 
\end{abstract}

\bigskip

\section{Introduction}
The support of a vector is the set of its nonzero coordinate positions.  In a linear code, a nonzero codeword is said to be minimal if its support is minimal (with respect to set inclusion).  Minimal codewords were first studied in connection with decoding \cite{Hwang, Agrell, Agrell2}.  They were reintroduced by Massey in the context of secret sharing \cite{Massey} and they were used in a protocol for secure two-party computation \cite{CCP}. Minimal codewords can be viewed as circuits in matroids and also as cycles in graphs. 

In general, it is difficult to determine the set of minimal codewords of a given linear code.  This was only done for some classes of codes, for instance see \cite{Agrell, AB, BM, DKL, YF, SST}.  The authors in \cite{Maxmin, Maxmin2, Minmin} investigated the maximum and minimum number of minimal codewords in binary linear codes.  Given the length and dimension, bounds and some exact values were presented.  This can be seen as a coding-theoretic analogue of problems considered in the setting of matroids \cite{DSL} and graphs \cite{ES}.  

In this work, we continue the study of the minimal codewords using techniques from projective geometry. First, we present a geometric characterization of minimal (and non-minimal) codewords.  Then we use this characterization to derive a lower bound on the number of minimal codewords of a linear code.  As a consequence, we obtain exact values of the minimum number of minimal codewords of linear codes of certain length and dimension.  Our result applies to both binary and non-binary linear codes. The geometric approach can also be extended to minimal subcode supports.  

\section{Theoretical background}

Let $\mathbb{F}_q$ be the finite field with $q$ elements where $q$ is a power of a prime. A $q$-ary $[n,k]_q$ \emph{linear code} $C$ is a $k$-dimensional subspace of the $n$-dimensional vector space $\mathbb{F}_q^n$. Elements $c\in C$ are called \emph{codewords} and $n$ is called the \emph{length} of the code. The \emph{support} of a codeword $c$ is the set of coordinates with a non-zero entry, i.e., $\supp(c)=\left\{i\in\{1,\dots,n\}\,:\, c_i\neq 0\right\}$. The \emph{Hamming weight} $\wt(c)$ of a codeword is the cardinality $|\supp(c)|$ of its support. We define $\supp(C)=\cup_{c\in C}\supp(c)$ and call $|\supp(C)|$ the \emph{effective length} of $C$. We call a code $C$ \emph{non-trivial} if its dimension $\dim(C)=k$ is at least $1$. Here we assume that all codes are non-trivial and that the effective length equals the length $n$ (or $n(C)$ to be more precise). A matrix $G$ with the property that the linear span of its rows generate the code $C$, is a \emph{generator matrix} of $C$.

Consider the projective space $PG(\mathbb{F}_q^k)$ and recall that its points are the 1-dimensional subspaces, its lines are the 2-dimensional subspaces and its hyperplanes are the $(k-1)$-dimensional subspaces of $\mathbb{F}_q^k$.  We use the abbreviation $\gaussm{k}{1}{q}=\frac{q^k-1}{q-1}$ for the number of points in $PG(\mathbb{F}_q^k)$.  The number of hyperplanes is also given by $\gaussm{k}{1}{q}$.   

Let $G_i$, $1\leq i\leq n$, be the $i$th column of a generator matrix $G$ of $C$.  To each $[n,k]_q$ code $C$, we can assign a multiset $\mathcal{P}$ of points in $PG(\mathbb{F}_q^k)$ by considering $\langle G_i\rangle$, the span of $G_i$.  For convenience of notation, we let $\mathcal{P}=\{\langle G_1\rangle,\langle G_2\rangle,\ldots,\langle G_n\rangle\}$.  Technically, a multiset of points can be described by a characteristic function $\chi$ mapping each point of $PG(\mathbb{F}_q^k)$ to a non-negative integer.  With this, the cardinality $|\mathcal{P}|$ is just the sum over $\chi(P)$ for all points $P$. By construction, $|\mathcal{P}|$ equals the effective length of $C$.  

Each non-zero codeword $c\in C$ corresponds to a hyperplane $H$ in $PG(\mathbb{F}_q^k)$ such that the set of zero coordinates of $c$ corresponds to $\mathcal{P}\cap H$.  In other words, $i\in\supp(c)$ if and only if $G_i\in PG(\mathbb{F}_q^k)\setminus (\mathcal{P}\cap H)$.  Hence, $\wt(c)=|\mathcal{P}|-|\mathcal{P}\cap H|$. We call two codewords \emph{equivalent} if they arise by a multiplication with a nonzero field element, so that equivalent codewords correspond to the same hyperplane.  

A codeword of $C\backslash\{\mathbf{0}\}$ is called \emph{minimal} if its support does not properly contain the support of another nonzero codeword.  General properties of minimal codewords are discussed in \cite{AB}.  We denote by $M(C)$ the number of non-equivalent minimal codewords in $C$, so that $M(C)\le \gaussm{k}{1}{q}$. If $G$ is a generator matrix of $C$ and $C'$ is the code that arises 
if we remove all zero-columns and all duplicated columns from $G$, then $M(C)=M(C')$. A code without zero- and duplicated columns in a generator matrix is called \emph{projective}. In geometric terms this means that the multiset $\mathcal{P}$ is indeed a set.  

We denote by $m_q(n,k)$ the minimum of $M(C)$ for all projective $[n,k]_q$ codes $C$ so that $m_q(n,k)$ is undefined if $n<k$   
or $n>\gaussm{k}{1}{q}$.  Obviously, we have $m_q(k,k)=k$, $m_q\left(\gaussm{k}{1}{q},k\right)=\gaussm{k}{1}{q}$, and $m_q(n,k)\le m_q(n',k)$ for $k\le n\le n'\le\gaussm{k}{1}{q}$.  

Similarly, we define $M_q(n,k)$ to be the maximum of $M(C)$ for all projective $[n,k]_q$ codes $C$.  This quantity was studied in \cite{Maxmin,Maxmin2} for the case of binary codes.  The focus of this work is on $m_q(n,k)$ and it is interesting to note that finding the minimum of $M(C)$ is one of the problems raised in \cite{Hwang}, the paper that introduced the concept of minimal codewords.

Kashyap showed that $m_2(n,k)\geq n$ and that the only binary codes that meet this bound are the direct sum of Simplex codes \cite{Kashyap}.
An alternative proof of the aforementioned lower bound was given in \cite{Minmin}.  The authors in \cite{Minmin} also showed that $m_2(n,n-1)=n, m_2(n,n-2)=n$ for $n\geq 6$, and computed 
bounds or exact values of $m_2(n,k)$ for $1\leq k\leq n\leq 15$.  They also determined the exact values of $m_2(n,k)$ restricted to the cycle codes from graphs for $1\leq k\leq n\leq 15$.

\section{A geometric approach to minimal codewords}

Let $C$ be a projective $[n,k]_q$ code and let $\mathcal{P}$ be the corresponding set of points in $PG(\mathbb{F}_q^k)$. For a codeword $c\in C$, we denote by $H_c$ the corresponding hyperplane in $PG(\mathbb{F}_q^k)$.  Suppose $c$ is not minimal.  Then there exists a non-zero codeword $c'$ such that $\supp(c')\subset \supp(c)$.  Equivalently, $(\mathcal{P}\cap H_c)\subset (\mathcal{P}\cap H_{c'})$.  Thus, we have the following geometric characterization of minimal codewords:  
\begin{lemma}
\label{lemma_characterization}
A non-zero codeword $c$ in an $[n,k]_q$ code $C$ is minimal if and only if $\langle \mathcal{P}\cap H_c\rangle=H_c$ or, equivalently, $\dim(\langle \mathcal{P}\cap H_c\rangle)=k-1$. 
\end{lemma}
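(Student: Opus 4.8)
The plan is to prove both directions of the equivalence by translating the containment of supports into the containment of point sets on hyperplanes, using the correspondence between codewords and hyperplanes already established. The key fact I will use repeatedly is that for a non-zero codeword $c$ we have $\supp(c')\subset\supp(c)$ (proper containment) if and only if $(\mathcal{P}\cap H_c)\subsetneq(\mathcal{P}\cap H_{c'})$, which in turn forces $H_c\neq H_{c'}$ since $\mathcal{P}$ spans the whole space $PG(\mathbb{F}_q^k)$ (the effective length equals $n$). I will also use the elementary dimension fact that a subspace $U\subseteq H_c$ satisfies $\langle U\rangle=H_c$ precisely when $\dim\langle U\rangle=k-1$, which gives the "or, equivalently" clause for free.

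For the contrapositive of the forward direction, suppose $\langle\mathcal{P}\cap H_c\rangle=:U$ is a proper subspace of $H_c$, so $\dim(U)\le k-2$. Then $U$ is contained in some hyperplane $H'\neq H_c$; let $c'$ be a codeword (unique up to scalar) with $H_{c'}=H'$. Since $\mathcal{P}\cap H_c\subseteq U\subseteq H'$, we get $\mathcal{P}\cap H_c\subseteq\mathcal{P}\cap H'$, and the inclusion is proper because $H_c\neq H'$ would otherwise be forced by $\langle\mathcal{P}\cap H_c\rangle=\langle\mathcal{P}\cap H'\rangle$ together with the spanning property — more carefully, if $\mathcal{P}\cap H_c=\mathcal{P}\cap H'$ then both hyperplanes contain the same point set whose span is $U$, but a point of $\mathcal{P}$ outside $U$ lies in exactly... here I should instead argue directly: $\mathcal{P}\not\subseteq H_c$ (else effective length drops), so pick $P\in\mathcal{P}\setminus H_c$; the subspace $\langle U,P\rangle$ has dimension $\dim(U)+1\le k-1$, hence lies in a hyperplane $H'$ containing neither... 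I will tidy this to: choose $H'\supseteq U$ with $H'\neq H_c$ and $P\in H'$ for some $P\in\mathcal{P}\setminus H_c$, which is possible exactly because $\dim(U)\le k-2$ leaves room to adjoin such a point. Then $P\in(\mathcal{P}\cap H')\setminus(\mathcal{P}\cap H_c)$, so the containment is strict, giving $\supp(c')\subsetneq\supp(c)$ and $c$ is not minimal.

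For the reverse direction, suppose $\langle\mathcal{P}\cap H_c\rangle=H_c$ and let $c'$ be any non-zero codeword with $\supp(c')\subseteq\supp(c)$; I must show $\supp(c')=\supp(c)$, i.e. $c'$ is a scalar multiple of $c$. The hypothesis $\supp(c')\subseteq\supp(c)$ gives $\mathcal{P}\cap H_c\subseteq\mathcal{P}\cap H_{c'}\subseteq H_{c'}$, hence $H_c=\langle\mathcal{P}\cap H_c\rangle\subseteq H_{c'}$; since both are hyperplanes of the same dimension $k-1$, this forces $H_c=H_{c'}$, so $c'$ and $c$ correspond to the same hyperplane and are therefore equivalent, whence $\supp(c')=\supp(c)$. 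This shows no codeword has support properly inside $\supp(c)$, so $c$ is minimal.

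The main obstacle is purely bookkeeping in the forward direction: producing a hyperplane $H'$ that both contains $\langle\mathcal{P}\cap H_c\rangle$ and is distinct from $H_c$, and then certifying that the support containment is \emph{proper} rather than merely an equality. Both points reduce to the spanning hypothesis on $\mathcal{P}$ (effective length $=n$) plus the dimension count $\dim\langle\mathcal{P}\cap H_c\rangle\le k-2$; once those are invoked cleanly the argument is routine, and the "equivalently" reformulation in terms of $\dim=k-1$ requires no separate work.
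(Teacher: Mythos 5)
Your proposal is correct and follows essentially the same route as the paper: the paper's (very terse) argument reduces non-minimality of $c$ to the existence of a hyperplane $H_{c'}$ with $(\mathcal{P}\cap H_c)\subsetneq(\mathcal{P}\cap H_{c'})$, and you simply fill in the omitted details — namely that such an $H_{c'}$ exists exactly when $\dim\langle\mathcal{P}\cap H_c\rangle\le k-2$, using a point $P\in\mathcal{P}\setminus H_c$ to extend $\langle\mathcal{P}\cap H_c\rangle$ to a hyperplane distinct from $H_c$ and to certify properness. Both directions are sound; the only cosmetic issue is the false start in your forward direction before you settle on the clean argument.
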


We note that an equivalent characterization in terms of the generator matrix was obtained by Agrell \cite{Agrell2}.  We can deduce from Lemma~\ref{lemma_characterization} that if $c\in C$ is a minimal codeword then $d\leq \wt(c)\leq n-k+1$ where $d$ is the minimum Hamming weight of $C$.  This is a known property of minimal codewords, see \cite{Hwang}.

Another well-known result that can be obtained from Lemma \ref{lemma_characterization} concerns $M_q(n,k)$.  Since a $(k-1)$-subset of $\mathcal{P}$ spans a hyperplane then we have $M_q(n,k)\leq {n\choose k-1}$.  This result was first proved in \cite{DSL} for matroids, and an alternative proof was given in \cite{Maxmin2} for binary codes.
We have equality if and only if each $(k-1)$-subset of $\mathcal{P}$ spans a distinct hyperplane.  This means that $\mathcal{P}$ is an $n$-arc in $PG(\mathbb{F}_q^k)$ or, equivalently, $C$ is an MDS code.

It follows that for each non-zero non-minimal codeword $c$, there exists a subspace $U_c\le H_c$ of dimension $k-2$, i.e., co-dimension 2, with 
$\langle \left\{x\,:\,x\in\mathcal{P}\cap H_c\right\}\rangle\le U_c$. Note that there may be several such subspaces $U_c$ and the existence of at least one such subspace $U_c$ implies that $c$ is a non-minimal codeword.

We now present a lower bound on $M(C)$, the number of non-equivalent minimal codewords in $C$.  We recall that $M(C)\le \gaussm{k}{1}{q}$.  Let $\alpha_q(k,r)$ denote the minimum cardinality of a point set $\mathcal{S}\subseteq PG(\mathbb{F}_q^k)$ such that there exist $r$ different hyperplanes 
  $H_1,\dots,H_r$ and $r$ subspaces $U_1,\dots,U_r$ of co-dimension $2$ with $U_i\le H_i$ for all $1\le i\le r$ and $\cup_{i=1}^r \left(H_i\backslash U_i\right) 
  \subseteq \mathcal{S}$.  For $k=2$, we define $\alpha_q(2,r)=r$ and for $r=0$, we define $\alpha_q(k,0)=0$.

\begin{prop}
  \label{prop_bound_1}
  Let $C$ be a projective $[n,k]_q$ code and $1\le r\le\gaussm{k}{1}{q}$ be an integer. If $n>\gaussm{k}{1}{q}-\alpha_q(k,r)$ then $M(C)>\gaussm{k}{1}{q}-r$.
\end{prop}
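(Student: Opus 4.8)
The plan is to argue by contradiction: assuming the conclusion fails, i.e.\ $M(C)\le\gaussm{k}{1}{q}-r$, I will produce $r$ non-minimal hyperplanes, use them to delete a set of $\ge\alpha_q(k,r)$ points from $PG(\mathbb{F}_q^k)$ that must avoid $\mathcal{P}$, and thereby force $n\le\gaussm{k}{1}{q}-\alpha_q(k,r)$, contradicting the hypothesis.

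First I would recall that $c\mapsto H_c$ is a bijection between the $\gaussm{k}{1}{q}$ equivalence classes of non-zero codewords of $C$ and the $\gaussm{k}{1}{q}$ hyperplanes of $PG(\mathbb{F}_q^k)$, and that by Lemma~\ref{lemma_characterization} the minimal ones are exactly those with $\langle\mathcal{P}\cap H_c\rangle=H_c$. Hence $M(C)=\gaussm{k}{1}{q}-N$, where $N$ is the number of hyperplanes $H$ with $\dim\langle\mathcal{P}\cap H\rangle\le k-2$. Assume for contradiction that $M(C)\le\gaussm{k}{1}{q}-r$; then $N\ge r$, so I may fix $r$ pairwise distinct hyperplanes $H_1,\dots,H_r$ with $\dim\langle\mathcal{P}\cap H_i\rangle\le k-2$. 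As observed right after Lemma~\ref{lemma_characterization}, for each such $H_i$ there is a codimension-$2$ subspace $U_i\le H_i$ with $\langle\mathcal{P}\cap H_i\rangle\le U_i$, hence $\mathcal{P}\cap H_i\subseteq U_i$, which is equivalent to $\mathcal{P}\cap(H_i\setminus U_i)=\emptyset$.

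Next I would set $\mathcal{S}:=\bigcup_{i=1}^r(H_i\setminus U_i)$. From $\mathcal{P}\cap(H_i\setminus U_i)=\emptyset$ for all $i$ we get $\mathcal{P}\cap\mathcal{S}=\emptyset$, so $\mathcal{P}\subseteq PG(\mathbb{F}_q^k)\setminus\mathcal{S}$ and therefore $n=|\mathcal{P}|\le\gaussm{k}{1}{q}-|\mathcal{S}|$. On the other hand, the triple $\bigl(\mathcal{S};\,H_1,\dots,H_r;\,U_1,\dots,U_r\bigr)$ satisfies exactly the requirements in the definition of $\alpha_q(k,r)$ — the $H_i$ are distinct hyperplanes, the $U_i\le H_i$ have codimension $2$, and trivially $\bigcup_{i=1}^r(H_i\setminus U_i)\subseteq\mathcal{S}$ because that union \emph{is} $\mathcal{S}$ — so $|\mathcal{S}|\ge\alpha_q(k,r)$. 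Combining the two inequalities gives $n\le\gaussm{k}{1}{q}-\alpha_q(k,r)$, contradicting $n>\gaussm{k}{1}{q}-\alpha_q(k,r)$. Hence $M(C)>\gaussm{k}{1}{q}-r$, as claimed.

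Finally I would note that the degenerate case $k=2$ needs no separate treatment: there hyperplanes are points and codimension-$2$ subspaces are the zero space (empty in $PG$), so each $H_i\setminus U_i$ is a single point, $\alpha_q(2,r)=r$ matches the stated convention, and the argument above applies verbatim. I do not expect a genuine obstacle in this proof; the only points requiring a little care are the bookkeeping of the codeword–hyperplane bijection (so that "few minimal codewords" translates correctly into "many hyperplanes meeting $\mathcal{P}$ in a low-dimensional set") and the remark that one may take the witnessing set in the definition of $\alpha_q(k,r)$ to be the union $\bigcup_{i=1}^r(H_i\setminus U_i)$ itself, which makes that definition directly applicable.
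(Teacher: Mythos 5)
Your proof is correct and follows essentially the same route as the paper: assume $M(C)\le\gaussm{k}{1}{q}-r$, extract $r$ non-minimal hyperplanes with their codimension-$2$ subspaces $U_i$, observe $\mathcal{P}$ avoids $\bigcup_i(H_i\setminus U_i)$, and bound $n$ via the definition of $\alpha_q(k,r)$. The only difference is that you spell out the bookkeeping (the codeword--hyperplane bijection and why the union itself witnesses $\alpha_q(k,r)$) that the paper leaves implicit.
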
 
\begin{proof}
  If $M(C)\le \gaussm{k}{1}{q}-r$, then $C$ contains at least $r$ non-minimal codewords. These imply the existence of $r$ different hyperplanes $H_1,\dots,H_r$ and 
  $r$ subspaces $U_1,\dots,U_r$ of co-dimension $2$ with $U_i\le H_i$ for all $1\le i\le r$ and $\mathcal{P}\cap \left(\cup_{i=1}^r \left(H_i\backslash U_i\right)\right)=\emptyset$. 
  Thus, $n=|\mathcal{P}|\le \gaussm{k}{1}{q}-\alpha_q(k,r)$.   
\end{proof}

The values of $\alpha_q(k,r)$ are easy to determine analytically if $r$ is small.  First, we have $\alpha_q(k,1)=q^{k-2}$ since $|H\backslash U|=q^{k-2}$ for any hyperplane $H$ and subspace $U\leq H$ of co-dimension 2. 

\begin{prop}\label{alpha_r2}
$\alpha_q(k,2)=2q^{k-2}-q^{k-3}$ for $k\ge 3$. 
\end{prop}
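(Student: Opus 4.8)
The plan is to establish both bounds separately. For the lower bound, I would begin with an arbitrary admissible configuration: two \emph{distinct} hyperplanes $H_1\neq H_2$ and codimension-$2$ subspaces $U_i\le H_i$. Using the exact count $|H_i\setminus U_i| = \gaussm{k-1}{1}{q}-\gaussm{k-2}{1}{q} = q^{k-2}$ together with inclusion--exclusion, bounding $|(H_1\setminus U_1)\cup(H_2\setminus U_2)|$ from below reduces to bounding $|(H_1\setminus U_1)\cap(H_2\setminus U_2)|$ from above. The key observation is that a point lies in the latter intersection exactly when it lies in $W:=H_1\cap H_2$ but in neither $U_1$ nor $U_2$; since $H_1\neq H_2$, the subspace $W$ has codimension $2$, and the intersection equals $W\setminus\big((U_1\cap W)\cup(U_2\cap W)\big)$.

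Next I would analyze $U_i\cap W$ inside $H_i$. Both $U_i$ and $W$ are codimension-$1$ subspaces of $H_i$, so either $U_i=W$ (then the intersection above is empty, the union has $2q^{k-2}$ points, and we are fine) or $\dim(U_i\cap W)=\dim W-1$, i.e.\ $U_i\cap W$ is a hyperplane of $W$. In the latter case $(U_1\cap W)\cup(U_2\cap W)$ contains at least $|U_1\cap W|=\gaussm{k-2}{1}{q}$ points, hence $|W\setminus((U_1\cap W)\cup(U_2\cap W))|\le\gaussm{k-1}{1}{q}-\gaussm{k-2}{1}{q}=q^{k-3}$ (writing $\gaussm{k-1}{1}{q}$ for $|W|$). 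Thus $|(H_1\setminus U_1)\cup(H_2\setminus U_2)|\ge 2q^{k-2}-q^{k-3}$ in every case, which gives $\alpha_q(k,2)\ge 2q^{k-2}-q^{k-3}$.

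For the matching upper bound I would exhibit a configuration attaining equality. Fix $H_1\neq H_2$, set $W=H_1\cap H_2$, choose a codimension-$1$ subspace $A\le W$ (so $\dim A=k-3$), and for each $i$ pick a point $P_i\in H_i\setminus W$ and put $U_i=A+\langle P_i\rangle\le H_i$. Then $\dim U_i=k-2$, $U_i\neq W$, and $U_i\cap W=A$: indeed $A\le U_i\cap W\le W$ with the two outer spaces differing in dimension by one, and $U_i\cap W=W$ would force $U_i=W$. Consequently $(H_1\setminus U_1)\cap(H_2\setminus U_2)=W\setminus A$ has exactly $q^{k-3}$ points, so $\mathcal{S}:=(H_1\setminus U_1)\cup(H_2\setminus U_2)$ has $2q^{k-2}-q^{k-3}$ points and witnesses the defining property for $r=2$; hence $\alpha_q(k,2)\le 2q^{k-2}-q^{k-3}$. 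Combining the two bounds proves the claim. For $k=3$ the construction degenerates harmlessly ($A=\{0\}$, $W$ a single point), and the count $q^{k-3}=1$ remains correct.

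The point-counting identities are routine. The only genuine subtlety is the dimension bookkeeping for $U_i\cap W$ inside $H_i$: one must treat the case $U_i=W$ separately and observe that it is never extremal, and one must check that the construction — and the exponents $q^{k-3}$, $\gaussm{k-2}{1}{q}$, etc. — degenerate correctly at $k=3$. That is where a careless argument is most likely to slip, so I would make those checks explicit.
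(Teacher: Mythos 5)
Your argument follows the same route as the paper's proof: write $|\mathcal{S}|=2q^{k-2}-|(H_1\setminus U_1)\cap(H_2\setminus U_2)|$, identify that intersection with $W\setminus\bigl((U_1\cap W)\cup(U_2\cap W)\bigr)$ for $W=H_1\cap H_2$, split into the cases $U_i=W$ versus $U_i\cap W$ a hyperplane of $W$, and exhibit a configuration attaining $q^{k-3}$ --- the paper does exactly this, only more tersely about the attaining configuration, which you spell out explicitly (a small plus, including the $k=3$ degeneration check). The one blemish is an index slip in your lower-bound count: since $W$ has vector-space dimension $k-2$ and $U_i\cap W$ has dimension $k-3$, the relevant identity is $\gaussm{k-2}{1}{q}-\gaussm{k-3}{1}{q}=q^{k-3}$, not $\gaussm{k-1}{1}{q}-\gaussm{k-2}{1}{q}$ (which equals $q^{k-2}$); your final value $q^{k-3}$ is the correct one, so this is a typo rather than a gap.
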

\begin{proof}
We consider $\mathcal{S}=\left(H_1\backslash U_1\right)\cup\left(H_2\backslash U_2\right)$ for two distinct hyperplanes $H_1$ and $H_2$, so that $\dim(H_1\cap H_2)=k-2$.  We have $|\mathcal{S}|=2q^{k-2}-|(H_1\backslash U_1)\cap(H_2\backslash U_2)|$.  If $H_1\cap H_2=U_1$ or $H_1\cap H_2=U_2$ then $|\mathcal{S}|=2q^{k-2}$.  Otherwise, we have $|(H_1\backslash U_1)\cap(H_2\backslash U_2)|=q^{k-3}$ or $q^{k-3}-q^{k-4}$ (if $k\geq 4$).  Therefore, $\alpha_q(k,2)=2q^{k-2}-q^{k-3}$ for $k\ge 3$.
\end{proof}

A \emph{$\kappa$-arc} in $PG(\mathbb{F}_q^3)$ is a set of $\kappa$ points in $PG(\mathbb{F}_q^3)$ no three of which are collinear.  A \emph{dual $\kappa$-arc} in $PG(\mathbb{F}_q^3)$ 
is a set of $\kappa$ lines in $PG(\mathbb{F}_q^3)$ no three of which have a common point. The maximum possible $\kappa$ such that a $\kappa$-arc in $PG(\mathbb{F}_q^3)$ exists is 
well known. It is $q+2$ if the field size $q$ is even and $q+1$ otherwise, see e.g.~\cite{JH}.

\begin{prop}\label{alpha_r}
Let $r\geq 3$ and $k\geq 3$. We have $\alpha_q(k,r)=r\cdot q^{k-2}-{r\choose 2}\cdot q^{k-3}$ if $q$ is odd and $r\le q$ or if $q$ is even and $r\le q+1$.
\end{prop}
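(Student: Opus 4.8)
The plan is to prove the formula by establishing a matching upper and lower bound on $\alpha_q(k,r)$ in the stated range. For the upper bound, I would exhibit an explicit configuration: start from a dual $r$-arc $\{L_1,\dots,L_r\}$ of lines in $PG(\mathbb{F}_q^3)$ (which exists precisely when $q$ is odd and $r\le q+1$, or $q$ is even and $r\le q+2$ — the hypothesis is one below this, so a dual $r$-arc certainly exists), lift it to a configuration in $PG(\mathbb{F}_q^k)$, and for each $i$ choose the hyperplane $H_i$ and a codimension-$2$ subspace $U_i\le H_i$ so that the pairwise intersections $H_i\cap H_j$ are exactly the lifted lines $L_i\cap L_j$ (i.e., each pair $H_i\cap H_j$ is a codimension-$2$ flat, all distinct, and no point lies in three of them). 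Then by inclusion–exclusion $|\bigcup_{i}(H_i\setminus U_i)|\le \sum_i q^{k-2} - \sum_{i<j} q^{k-3}$ once one checks that the $U_i$ can be chosen to contain no point of any $H_i\cap H_j$; the no-three-concurrent property of the dual arc is exactly what makes the inclusion–exclusion terminate after the pairwise terms, giving $\alpha_q(k,r)\le r q^{k-2}-\binom{r}{2}q^{k-3}$.

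For the lower bound I would argue that \emph{any} admissible configuration $(H_i,U_i)_{i=1}^r$ has $|\bigcup_i(H_i\setminus U_i)|\ge r q^{k-2}-\binom{r}{2}q^{k-3}$. Here I would use inclusion–exclusion the other way: $|\bigcup_i(H_i\setminus U_i)|\ge \sum_i |H_i\setminus U_i| - \sum_{i<j}|(H_i\setminus U_i)\cap(H_j\setminus U_j)|$, and each term $|(H_i\setminus U_i)\cap(H_j\setminus U_j)|$ is at most $|H_i\cap H_j\setminus U_i|\le q^{k-3}$, since $H_i\cap H_j$ has codimension at most $2$ (hence at most $q^{k-2}$ points, but removing the flat $U_i\cap H_j$... ) — more carefully, $H_i\cap H_j\subseteq H_i$ has codimension $1$ or $2$ in $H_i$; if it equals $U_i$ or is contained in it the pairwise intersection with $H_i\setminus U_i$ is empty, and otherwise $(H_i\setminus U_i)\cap H_j$ is a flat of $H_i$ of codimension $2$ minus something, of size $q^{k-3}$ at most. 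Summing gives the bound. The subtle point is that Bonferroni only gives a one-sided inequality, so I should make sure the direction I need (lower bound on the union via subtracting pairwise overlaps) is the valid one — it is, since the truncated inclusion–exclusion sum $\sum|A_i|-\sum|A_i\cap A_j|$ underestimates $|\bigcup A_i|$.

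The main obstacle I anticipate is the \emph{lower} bound, specifically ruling out configurations that ``cheat'' by making the $H_i\cap H_j$ coincide or by exploiting triple intersections to shrink the union below the naive Bonferroni estimate. Bonferroni handles pairwise overlaps cleanly, but one must verify that no clever choice of the $U_i$ or of coincident pairwise intersections can do better; this is presumably where the arc condition $r\le q$ (odd) resp. $r\le q+1$ (even) enters — it guarantees enough ``room'' in $PG(\mathbb{F}_q^3)$ that the extremal configuration is forced to look like a dual arc, and beyond that range the pattern of forced coincidences changes (which is why the proposition stops there). I would reduce to the case $k=3$ by a quotient/projection argument: intersecting everything with a complementary $(k-3)$-flat, or projecting from a $(k-4)$-flat in general position, turns the $H_i$ into planes and the $U_i$ into lines in $PG(\mathbb{F}_q^3)$, scaling all cardinalities by $q^{k-3}$, so that $\alpha_q(k,r)=q^{k-3}\alpha_q(3,r)$ and it suffices to prove $\alpha_q(3,r)=rq - \binom{r}{2}$ for $r$ in the stated range. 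For $k=3$ the lines $U_i$ are points' duals and the ``no three concurrent'' condition on the $r$ lines dual to the $H_i$ is exactly the dual-arc condition, closing the argument.
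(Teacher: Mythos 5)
Your proposal is correct and follows essentially the same route as the paper: the lower bound is the unconditional degree-two Bonferroni estimate combined with $|(H_i\setminus U_i)\cap(H_j\setminus U_j)|\le q^{k-3}$ (this already handles every ``cheating'' configuration, so the worry in your last paragraph and the reduction $\alpha_q(k,r)=q^{k-3}\alpha_q(3,r)$ are unnecessary --- the arc hypothesis enters only in the construction), while the upper bound is the same cone-over-a-dual-arc construction, the paper merely taking a dual $(r+1)$-arc and setting $U_i=\langle L_i\cap L_{r+1},X\rangle$ as a uniform way of making your choice of a point of $L_i$ distinct from all $L_i\cap L_j$. The only wording to fix is that $U_i$ ``contains no point of any $H_i\cap H_j$'' should be weakened to $U_i\neq H_i\cap H_j$ with $U_i\cap H_i\cap H_j=X$ (equivalently, $P_i\ne L_i\cap L_j$ in the plane), since for $k\ge 4$ two codimension-$2$ subspaces of $H_i$ necessarily meet.
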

\begin{proof}
First we note that $\alpha_q(k,r)\ge r\cdot q^{k-2}-{r\choose 2}q^{k-3}$ for $k\ge 3$ and $r\ge 1$, see the analysis in the proof of Proposition \ref{alpha_r2}.  We will show that this lower bound is also tight if $r$ is not too large.
  
Fix a subspace $X$ of co-dimension 3.  All subspaces $H_i$ and $U_i$, $i=1,2,3$, to be constructed will contain $X$, thus we can describe the setting in the quotient space 
  $\overline{V}:=\mathbb{F}_q^k/X \cong \mathbb{F}_q^3$, which may be considered geometrically as a projective plane. In $\overline{V}$ we choose dual $(r+1)$-arc $L_1,\dots,L_{r+1}$, 
  which is possible due to the assumed upper bound on $r$. By construction, the intersections of the $L_i$ are pairwise disjoint. For $1\le i\le r$ let $P_i=L_i\cap L_{r+1}$, 
  i.e., the intersection point of the lines $L_i$ and $L_{r+1}$. With this, we set $H_i=\langle L_i,X\rangle$ and $U_i=\langle P_i,X\rangle$ for $1\le i\le r$.
  
Let $\mathcal{S}=\cup_{i=1}^r \left(H_i\backslash U_i\right)$. Since $\left|H_i\backslash U_i\right|=q^{k-2}$ for $1\leq i\leq r$, 
  $\left|\left(H_i\backslash U_i\right)\cap \left(H_j\backslash U_j\right)\right|=q^{k-3}$ for $1\leq i<j\leq r$, and $\cap_{i\in I} \left(H_i\backslash U_i\right)=\emptyset$ 
  (note that $\cap_{i\in I} H_i=\cap_{i\in I} U_i=X$) for all $I\subseteq\{1,\dots,r\}$ with $|I|\geq 3$, we have $|\mathcal{S}|=r\cdot q^{k-2}-{r\choose 2}\cdot q^{k-3}$.  
\end{proof}

To turn the bound of Proposition~\ref{prop_bound_1} into a statement on exact values for $m_q(n,k)$ is slightly more technical:
\begin{prop}
\label{prop_bound_2}
  For a given field size $q$, let $n$ and $k$ be positive integers with $2\le k\le n\le\gaussm{k}{1}{q}$. Let $1\le r\le\gaussm{k}{1}{q}$ be an integer 
  with $n>\gaussm{k}{1}{q}-\alpha_q(k,r)$ and $n\le \gaussm{k}{1}{q}-\alpha_q(k,r-1)$. Then $m_q(n,k)=\gaussm{k}{1}{q}-r+1$.
\end{prop}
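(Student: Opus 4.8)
The plan is to combine the lower bound from Proposition~\ref{prop_bound_1} with a matching construction. The two hypotheses on $r$ bracket $n$ between the thresholds for $r$ and $r-1$, so we expect exactly $r-1$ non-minimal codewords to be forced, giving $M(C)\ge \gaussm{k}{1}{q}-(r-1)$. The upper bound then requires exhibiting a single projective $[n,k]_q$ code $C$ with $M(C)=\gaussm{k}{1}{q}-r+1$, i.e., with exactly $r-1$ non-minimal non-equivalent codewords.

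First I would apply Proposition~\ref{prop_bound_1} with the parameter $r-1$ in place of $r$ (the case $r=1$ being handled separately, where $n>\gaussm{k}{1}{q}-q^{k-2}$ forces $M(C)=\gaussm{k}{1}{q}$ by $\alpha_q(k,1)=q^{k-2}$ and the convention $\alpha_q(k,0)=0$). The hypothesis $n>\gaussm{k}{1}{q}-\alpha_q(k,r)$ is exactly what Proposition~\ref{prop_bound_1} needs to conclude $M(C)>\gaussm{k}{1}{q}-r$, hence $M(C)\ge \gaussm{k}{1}{q}-r+1$ for every projective $[n,k]_q$ code; this already gives $m_q(n,k)\ge \gaussm{k}{1}{q}-r+1$.

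For the upper bound, the idea is to realize the extremal configuration underlying $\alpha_q(k,r-1)$. By definition of $\alpha_q(k,r-1)$ there is a point set $\ss$ of size $\alpha_q(k,r-1)$ together with $r-1$ hyperplanes $H_1,\dots,H_{r-1}$ and co-dimension-$2$ subspaces $U_i\le H_i$ with $\cup_{i=1}^{r-1}(H_i\backslash U_i)\subseteq\ss$. Take $\mathcal{P}$ to be the complement $PG(\F_q^k)\setminus\ss$, possibly shrunk to exactly $n$ points; this is legitimate since $|\mathcal{P}|=\gaussm{k}{1}{q}-\alpha_q(k,r-1)\ge n$ by the second hypothesis, and since $n\ge k$ we must check $\mathcal{P}$ still spans $PG(\F_q^k)$ so that the associated code is genuinely $k$-dimensional and projective. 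By construction each $H_i$ meets $\mathcal{P}$ inside $U_i$, so $\langle\mathcal{P}\cap H_i\rangle\le U_i$ has dimension $\le k-2$; by Lemma~\ref{lemma_characterization} the codeword attached to $H_i$ is non-minimal. This produces at least $r-1$ non-minimal non-equivalent codewords, so $M(C)\le \gaussm{k}{1}{q}-(r-1)$, matching the lower bound.

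The main obstacle is the bookkeeping in the construction: one must ensure that after removing $\ss$ (and then possibly deleting a few more points to reach exactly $n$) the remaining set $\mathcal{P}$ still generates $PG(\F_q^k)$, so that $\dim(C)=k$ and $C$ is projective with effective length $n$; and one must confirm that the $r-1$ hyperplanes $H_i$ remain among those with $\langle\mathcal{P}\cap H_i\rangle$ of dimension $\le k-2$ even after the extra deletions (deleting points only shrinks $\mathcal{P}\cap H_i$, so this is automatic, but the spanning condition needs the concrete extremal configurations from Propositions~\ref{alpha_r2} and~\ref{alpha_r}, where $\ss$ sits inside a union of few hyperplanes through a fixed co-dimension-$3$ space, leaving ample points outside to span). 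Handling the small-$r$ base cases and the degenerate case $k=2$ (where $\alpha_q(2,r)=r$ forces $\mathcal{P}=PG(\F_q^2)$ minus $r-1$ points and every remaining hyperplane is a single point, automatically non-minimal unless it lies in $\mathcal{P}$) rounds out the argument.
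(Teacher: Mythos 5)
Your proposal is correct and follows essentially the same route as the paper: the lower bound is Proposition~\ref{prop_bound_1} applied with parameter $r$ (your phrase ``with $r-1$ in place of $r$'' is a slip, but the inequality you actually use is the right one), and the upper bound comes from the code whose point set is the complement of a set attaining $\alpha_q(k,r-1)$, each $H_i$ yielding a non-minimal codeword via Lemma~\ref{lemma_characterization}. The one difference is that the paper never shrinks that complement to exactly $n$ points; it keeps the code of effective length $n'=\gaussm{k}{1}{q}-\alpha_q(k,r-1)\ge n$ and invokes the monotonicity $m_q(n,k)\le m_q(n',k)$ already recorded in Section~2, which disposes of the spanning/deletion bookkeeping you flag as the main obstacle.
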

\begin{proof}
  From Proposition~\ref{prop_bound_1} we directly conclude $m_q(n,k)\ge \gaussm{k}{1}{q}-r+1$. Let $\mathcal{S}$ be a set of points in $PG(\mathbb{F}_q^k)$ attaining 
  $\alpha_q(k,r-1)$ and $C$ be the linear code corresponding to the complement of $\mathcal{S}$. Then, $C$ has effective length $n'=\gaussm{k}{1}{q}-\alpha_q(k,r-1)\ge n$ and 
  at least $r-1$ non-minimal codewords. If $C$ has at least $r$ non-minimal codewords, then $\alpha_q(k,r)\le \alpha_q(k,r-1)$, i.e., $\alpha_q(k,r)=\alpha_q(k,r-1)$, which is 
  impossible due to our assumption on $n$. Thus, $C$ has exactly $r-1$ non-minimal codewords. Since $n'\ge n$ we have $m_q(n,k)\le m_q(n',k) \le \gaussm{k}{1}{q}-r+1$.    
\end{proof}  


Setting $r=1$ in Proposition \ref{prop_bound_2}, we obtain the following: for $k\geq 2$, if $\gaussm{k}{1}{q}-q^{k-2}<n\leq \gaussm{k}{1}{q}$ then $m_q(n,k)=\gaussm{k}{1}{q}$ .  Next we show that $m_q\left(\gaussm{k}{1}{q}-q^{k-2},k\right)<\gaussm{k}{1}{q}$.  Let $H$ be a hyperplane and $U\leq H$ a subspace of co-dimension 2.  Consider the code $C$ whose point set $\mathcal{P}=PG(\mathbb{F}_q^k)\backslash (H\backslash U)$.  Note that $|\mathcal{P}|=\gaussm{k}{1}{q}-q^{k-2}$.  Then $C$ has at least one non-minimal codeword (the one associated with $H$).  

Since $m_q(n,k)$ attains the maximum possible value for $M(C)$ then all codes in this range have the property that all non-zero codewords are minimal.  These codes are called \emph{minimal codes} and were first studied in \cite{AB,DY}.  Minimal codes were also used in the protocol for secure two-party computation proposed in \cite{CCP}.  If $C$ is an $[n,k]_q$ minimal code then it was shown in \cite{LWC,TQLZ,ABN} that the length satisfies $n\geq (k-1)q+1$.  The case of $r=1$ above gives a tight lower bound for projective $[n,k]_q$ minimal codes as $n\geq \gaussm{k}{1}{q}-q^{k-2}+1$.

When $r=2$ in Proposition \ref{prop_bound_2}, we get: for $k\geq 3$, if $\gaussm{k}{1}{q}-2q^{k-2}+q^{k-3}<n\leq \gaussm{k}{1}{q}-q^{k-2}$ then $m_q(n,k)=\gaussm{k}{1}{q}-1$.  For this range of $k$ and $n$, the value of $m_q(n,k)$ is the maximum possible value.  Hence, we can say that each code $C$ in this range has $M(C)=\gaussm{k}{1}{q}-1$, i.e. has exactly one non-minimal codeword.

We can apply the above discussion to update the tables given in \cite{Minmin}.  For example, we have $m_2(6,3)=7$ and $m_2(n,4)=15$ for $n=12,13,14,15$.  For the remaining entries of 
Table~\ref{tab_minimal_codewords} we consider an exhaustive enumeration of linear codes. First note that if a linear code $C$ contains a codeword of weight 1 then removing the 
corresponding coordinate yields a code $C'$ with $n(C') = n(C)-1$ and $M(C') = M(C)-1$. Thus it is sufficient to consider all projective $[n,k]_2$ codes with minimum distance at least 2. 
These can be generated easily and for each code we can simply count the number of minimal codewords. To this end we have applied the algorithm from \cite{LC}. 

\begin{table}[htbp]
\begin{center}
{\small
\begin{tabular}{|c|c|c|c|c|c|c|c|c|c|c|c|c|c|c|}\hline
$n/k$ & 2 & 3   & 4     & 5     & 6     & 7     & 8     & 9  & 10 & 11 & 12 & 13 & 14 & 15 \\\hline
3     & 3 & 3   &       &       &       &       &       &    &    &    &    &    &    & \\\hline
4     &   & 4   & 4     &       &       &       &       &    &    &    &    &    &    & \\\hline
5     &   & 6   & 5     & 5     &       &       &       &    &    &    &    &    &    & \\\hline
6     &   & 7   & 6     & 6     & 6     &       &       &    &    &    &    &    &    & \\\hline
7     &   & 7   & 8     & 7     & 7     & 7     &       &    &    &    &    &    &    & \\\hline
8     &   &     & 8     & 9     & 8     & 8     & 8     &    &    &    &    &    &    & \\\hline
9     &   &     & 12    & 9     & 9     & 9     & 9     & 9 &    &    &    &    &    & \\\hline
10    &   &     & 14    & 10    & 10    & 10    & 10    & 10 & 10 &    &    &    &    & \\\hline
11    &   &     & 14    & 15    & 11    & 11    & 11    & 11 & 11 & 11 &    &    &    & \\\hline
12    &   &     & 15    & 15    & 13    & 12    & 12    & 12 & 12 & 12 & 12 &    &    & \\\hline
13    &   &     & 15    & 16    & 14    & 13    & 13    & 13 & 13 & 13 & 13 & 13 &    & \\\hline
14    &   &     & 15    & 16    & 14    & 15    & 14    & 14 & 14 & 14 & 14 & 14 & 14 & \\\hline
15    &   &     & 15    & 16    & 17    & 15    & 16    & 15 & 15 & 15 & 15 & 15 & 15 & 15\\
\hline\end{tabular}}
\end{center}
\caption{$m_2(n,k)$ for $3\leq n\leq 15, 1\leq k\leq 9$}
\label{tab_minimal_codewords}
\end{table}

\section{Minimal subcode supports}

The geometric approach used in the previous section can be extended to subcode supports.  Let $C$ be a projective $[n,k]_q$ code and let $D$ be an $l$-dimensional subcode of $C$.  The \emph{support of D}, denoted by $\supp(D)$, is the union of the supports of all the codewords in $D$ and the \emph{weight of D}, denoted by $\wt(D)$, is the cardinality of its support.  The $l$-th \emph{generalized Hamming weight} $d_l$ of $C$ is the minimum among the weights of the $r$-dimensional subcodes of $C$ \cite{Wei}.  In short,
\begin{align*}
\supp(D)&=\{i\in\{1,\ldots,n\}\;:\;\exists \;v\in D \text{ with } v_i\neq 0\}\\
\wt(D)&=|\supp(D)|\\
d_l&=\min\{\wt(D)\;:\;D\leq C, \dim(D)=l\}.
\end{align*}

For a given subcode $D$ with $\dim(D)=l$, we can associate a subspace in $PG(\mathbb{F}_q^k)$ of codimension $l$.  Let $G$ be a generator matrix for $C$.  Then there exists an $l\times k$ matrix $M$ such that the rows of $MG$ form a basis for $D$.  The nullspace $W$ of $M$ is a subspace in $PG(\mathbb{F}_q^k)$ of co-dimension $l$.  In fact, there is a one-to-one correspondence between the subcodes of $C$ of dimension $l$ and subspaces of $PG(\mathbb{F}_q^k)$ of co-dimension $l$ (for more details, see \cite{TV,Jurrius}).
 
Let $\mathcal{P}\subseteq PG(\mathbb{F}_q^k)$ be the set of points associated with $C$. Let $D$ be an $l$-dimensional subcode of $C$. Then $D$ corresponds to a subspace $W$ in $PG(\mathbb{F}_q^k)$ of co-dimension $l$.  From \cite{Jurrius}, we have $\supp(D)=PG(\mathbb{F}_q^k)\backslash(\mathcal{P}\cap W)$ and $wt(D)=n-|\mathcal{P}\cap W|$. The $l$-th generalized Hamming weight $d_l=n-\min\{|\mathcal{P}\cap W|\;:\; W \text{ subspace of co-dimension } l\}$.  We say that $D$ is a \emph{support-minimal subcode} if there is no other $l$-dimensional subcode $D'\leq C$ such that $\supp(D')\subset \supp(D)$.  

The following lemma extends the geometric characterization in the previous section to subcodes:
\begin{lemma}\label{min_subcode}
Let $C$ be a projective $[n,k]_q$ code and $\mathcal{P}$ be the corresponding set of points in $PG(\mathbb{F}_q^k)$.  Let $D$ be an $l$-dimensional subcode of $C$ and consider the associated subspace $W_D$ in $PG(\mathbb{F}_q^k)$ of co-dimension $l$. Then $supp(D)$ is minimal if and only if $\langle \mathcal{P}\cap W_D\rangle=W_D$.  Equivalently, $\dim (\langle \mathcal{P}\cap W_D\rangle) = k-l$.
\end{lemma}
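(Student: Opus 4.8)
The plan is to mimic the proof of Lemma~\ref{lemma_characterization}, replacing hyperplanes with subspaces of co-dimension $l$ and keeping track of the correspondence between $l$-dimensional subcodes and co-dimension-$l$ subspaces. First I would set up the contrapositive on each side. Suppose $\supp(D)$ is \emph{not} minimal, so there is another $l$-dimensional subcode $D'$ with $\supp(D')\subset\supp(D)$. Using the dictionary $\supp(D)=PG(\mathbb{F}_q^k)\setminus(\mathcal{P}\cap W_D)$ recalled from \cite{Jurrius}, the strict inclusion $\supp(D')\subset\supp(D)$ translates into the strict inclusion $(\mathcal{P}\cap W_D)\subset(\mathcal{P}\cap W_{D'})$ of the point sets cut out by the two subspaces. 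In particular $\mathcal{P}\cap W_D$ is contained in $W_{D'}$, a subspace whose dimension is $k-l$, the same as that of $W_D$; since $W_{D'}\neq W_D$ (otherwise the point sets would be equal), $\langle\mathcal{P}\cap W_D\rangle$ is contained in $W_D\cap W_{D'}$, a proper subspace of $W_D$, hence $\dim\langle\mathcal{P}\cap W_D\rangle\le k-l-1<k-l$. This gives one direction.

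For the converse, suppose $\dim\langle\mathcal{P}\cap W_D\rangle\le k-l-1$, i.e. the points of $\mathcal{P}$ lying in $W_D$ span a subspace $U$ of dimension at most $k-l-1$ sitting inside $W_D$. Pick any subspace $W'$ of dimension exactly $k-l$ with $U\le W'$ and $W'\neq W_D$; such a $W'$ exists because $W_D$ properly contains $U$, and there is at least one subspace of the right dimension through $U$ other than $W_D$ (here the hypothesis $l\le k-1$, implicit since $D$ is a proper-enough subcode and $\langle\mathcal{P}\cap W_D\rangle$ has dimension $<k-l$, guarantees $W_D$ is not the whole space, so there is room to rotate; the degenerate case $\mathcal{P}\cap W_D=\emptyset$ needs $W'$ merely distinct from $W_D$ of the same dimension, which again exists as long as $k-l<k$). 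By the one-to-one correspondence between co-dimension-$l$ subspaces and $l$-dimensional subcodes, $W'$ corresponds to some $l$-dimensional subcode $D'$ with $\mathcal{P}\cap W_{D'}\supseteq \mathcal{P}\cap W_D$, and the inclusion is strict because $W'\neq W_D$ and the two point sets would otherwise force $\langle\mathcal{P}\cap W_D\rangle$ to lie in both $W_D$ and $W'$ — consistent — so I instead argue strictness directly: since $W'\neq W_D$ but both contain $U\supseteq \mathcal{P}\cap W_D$, to get a \emph{strict} enlargement of the point set I should choose $W'$ to additionally contain at least one point of $\mathcal{P}$ outside $W_D$, which is possible precisely because $U$ has co-dimension at least $2$ inside $\langle\mathcal{P}, U\rangle$ when $\langle\mathcal{P}\cap W_D\rangle\subsetneq W_D$. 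Translating back, $\supp(D')\subset\supp(D)$, so $\supp(D)$ is not minimal.

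The main obstacle I anticipate is the converse direction, specifically verifying that one can always choose the competitor subspace $W'$ so that the point-set inclusion is \emph{strict} rather than just an equality. In the hyperplane case ($l=1$) this is automatic: if $\mathcal{P}\cap H_c$ spans a subspace $U$ of co-dimension $2$, any other hyperplane through $U$ contains the same $\mathcal{P}\cap U$ and — because $\mathcal{P}$ spans the whole space — must pick up a new point of $\mathcal{P}$. For general $l$ the analogous counting needs a brief argument that the co-dimension-$l$ subspaces through a fixed co-dimension-$(l+1)$ subspace $U=\langle\mathcal{P}\cap W_D\rangle$ partition (the points of $\mathcal{P}$ outside $U$) among $q^{?}+1$ candidates, one of which is $W_D$ itself, so some other candidate must meet $\mathcal{P}\setminus U$. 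I would phrase this via the quotient space $\mathbb{F}_q^k/U$, in which $W_D/U$ is a hyperplane and the points of $\mathcal{P}$ not in $U$ map to nonzero vectors not all lying in that hyperplane (again using that $\mathcal{P}$ spans $\mathbb{F}_q^k$), so a different hyperplane of the quotient catches one of them; pulling back yields the required $W'$. Once strictness is secured, the rest is the straightforward symmetric bookkeeping sketched above, and the two reductions — "$\supp$ not minimal" $\Rightarrow$ "point span deficient" and its converse — assemble into the stated equivalence.
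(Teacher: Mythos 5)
Your overall strategy is exactly the one the paper intends: Lemma~\ref{min_subcode} is stated without proof as the direct analogue of Lemma~\ref{lemma_characterization}, using the dictionary $\supp(D)$ versus $\mathcal{P}\cap W_D$ from \cite{Jurrius}, and your forward direction (non-minimal support forces $\langle\mathcal{P}\cap W_D\rangle\le W_D\cap W_{D'}$, a proper subspace of $W_D$) is correct and complete.

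One step in your converse is dimensionally wrong as written, though easily repaired. If $U$ is a co-dimension-$(l+1)$ subspace with $\langle\mathcal{P}\cap W_D\rangle\le U\le W_D$, then $\mathbb{F}_q^k/U$ has dimension $l+1$, and $W_D/U$ is a \emph{one-dimensional} subspace of it, not a hyperplane (these coincide only when $l=1$). The subspaces of co-dimension $l$ through $U$ correspond to the points of the quotient, not to its hyperplanes; a "different hyperplane of the quotient" pulls back to a hyperplane of $\mathbb{F}_q^k$, which for $l\ge 2$ does not correspond to an $l$-dimensional subcode, so that sentence, taken literally, fails precisely in the new cases this lemma is meant to cover. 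The fix is the direct construction you already gesture at: since $C$ has dimension $k$, the set $\mathcal{P}$ spans $\mathbb{F}_q^k$, so there is a point $P\in\mathcal{P}\setminus W_D$; set $W'=\langle U,P\rangle$, a subspace of dimension $k-l$ containing $U\supseteq\mathcal{P}\cap W_D$ and the extra point $P$, whence $\mathcal{P}\cap W'\supsetneq\mathcal{P}\cap W_D$ and the corresponding subcode $D'$ satisfies $\supp(D')\subsetneq\supp(D)$. With that replacement your proof is correct and matches the paper's (implicit) argument.
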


If $D$ is a support-minimal subcode with $\dim(D)=l$ then $d_l\leq \wt(D)\leq n-k-l$, where $d_l$ is the $l$-th generalized Hamming weight of $C$.  Minimal subcode supports were studied as circuits of certain matroids in \cite{Britz}.  It was shown that the set of minimal subcode supports determines the multiset of subcode supports.  For $1\leq l'\leq l\leq k$, the set of minimal $l'$-dimensional subcode supports also determines the set of minimal $l$-dimensional subcode supports.   

\begin{example}
We look at some codes and their support-minimal subcodes.
\begin{enumerate}
\item Simplex codes.  Let $C$ be the $k$-th order $q$-ary Simplex code which has parameters $[(q^k-1)/(q-1),k,q^{k-1}]$.  The columns of the generator matrix for $C$ form a set of non-zero representatives of the 1-dimensional subspaces of $\mathbb{F}_q^k$.  This means that the point set associated with $C$ is $PG(\mathbb{F}_q^k)$.  By Lemma \ref{min_subcode}, for a given $1\leq l\leq k$, all the $l$-dimensional subcodes of $C$ are support-minimal and have the same weight.
 
\item $l$-MDS codes.  Let $C$ be an $l$-MDS code, i.e. $d_l=n-k+l$.  It follows that among the $l$-dimensional subcodes of $C$, the only support-minimal subcodes are those with weight equal to $d_l$.  For an $l$-MDS code, we have $d_l'=n-k+l'$ for $l'\geq l$.  Hence, among the $l'$-dimensional subcodes of $C$, the only support-minimal subcodes are those with weight equal to $d_l'$.  In particular, if $C$ is an MDS code then we can completely determine all the support-minimal subcodes for $1\leq l\leq k$.
\end{enumerate} 
\end{example}

For $1\leq l\leq k$, we define $M^l(C)$ to be the number of support-minimal $l$-dimensional subcodes of $C$.  When $l=1$ we get $M^1(C)=(q-1)M(C)$.  An upper bound for $M^l(C)$ is given by $M^l(C)\leq \gaussm{k}{l}{q}$ where $$\gaussm{k}{l}{q} = \dfrac{(q^k-1)(q^{k-1}-1)\cdots(q^{k-l+1}-1)}{(q^l-1)(q^{l-1}-1)\cdots(q-1)}$$ is the Gaussian binomial coefficient that gives the number of subspaces in $PG(\mathbb{F}_q^k)$ of co-dimension $l$.  

From Lemma \ref{min_subcode}, a subcode $D$ is not support-minimal if there exists a subspace $U_D\leq W_D$ of co-dimension $l+1$ such that $\langle \mathcal{P}\cap W_D\rangle \leq U_D$.  For $1\leq l\leq k$ and $1\leq r\leq \gaussm{k}{l}{q}$, we define $\alpha^l_q(k,r)$ to be the minimum cardinality of a point set $\mathcal{S}\subseteq PG(\mathbb{F}_q^k)$ such that there exist $r$ distinct subspaces $W_1,\ldots,W_r$ of co-dimension $l$ and $r$ subspaces $U_1,\ldots,U_r$ of co-dimension $l+1$ with $U_i\leq W_i$ and $\cup_{i=1}^r \left(W_i\backslash U_i\right) \subseteq \mathcal{S}$.  The next proposition extends Proposition \ref{prop_bound_1} to subcodes.

\begin{prop}\label{prop_bound_3}
Let $C$ be a projective $[n,k]_q$ code and consider integers $l,r$ such that $1\leq l\leq k$ and $1\le r\le\gaussm{k}{l}{q}$. If $n>\gaussm{k}{l}{q}-\alpha_q^l(k,r)$ then $M^l(C)>\gaussm{k}{l}{q}-r$. 
\end{prop}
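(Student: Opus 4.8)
The plan is to repeat the argument of Proposition~\ref{prop_bound_1} almost verbatim, with hyperplanes replaced by subspaces of co-dimension $l$ and with the co-dimension-$2$ subspaces $U_i$ replaced by subspaces of co-dimension $l+1$. I would argue by contraposition: assuming $M^l(C)\le\gaussm{k}{l}{q}-r$, I will produce the configuration witnessing $\alpha_q^l(k,r)$ inside the complement of $\mathcal{P}$ and conclude $n\le\gaussm{k}{1}{q}-\alpha_q^l(k,r)$, which already contradicts the hypothesis because $\gaussm{k}{l}{q}\ge\gaussm{k}{1}{q}$ for $1\le l\le k-1$ (the case $l=k$ being degenerate: then $r=1$, $C$ is its own only $k$-dimensional subcode and is support-minimal, and there is no subspace of co-dimension $l+1=k+1$).

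First I would invoke the one-to-one correspondence between $l$-dimensional subcodes of $C$ and subspaces of $PG(\mathbb{F}_q^k)$ of co-dimension $l$ recalled just before Lemma~\ref{min_subcode}; in particular $C$ has exactly $\gaussm{k}{l}{q}$ subcodes of dimension $l$. Hence the assumption $M^l(C)\le\gaussm{k}{l}{q}-r$ means there are at least $r$ pairwise distinct $l$-dimensional subcodes $D_1,\dots,D_r$ that are not support-minimal, and the associated co-dimension-$l$ subspaces $W_i:=W_{D_i}$ are pairwise distinct.

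Next, for each $i$ I would extract a subspace $U_i\le W_i$ of co-dimension $l+1$ with $\langle\mathcal{P}\cap W_i\rangle\le U_i$. This is the converse direction of Lemma~\ref{min_subcode} (exactly as the remark preceding Proposition~\ref{prop_bound_1} derives the subspaces $U_c$ from non-minimal codewords): since $D_i$ is not support-minimal, $\langle\mathcal{P}\cap W_i\rangle$ has dimension at most $k-l-1$, hence lies in some subspace $U_i\le W_i$ of dimension $k-l-1$. Then $\mathcal{P}\cap W_i\subseteq U_i$, so $\mathcal{P}\cap(W_i\backslash U_i)=\emptyset$ for all $i$, giving $\mathcal{P}\cap\bigcup_{i=1}^r(W_i\backslash U_i)=\emptyset$, i.e. $\bigcup_{i=1}^r(W_i\backslash U_i)$ is contained in $PG(\mathbb{F}_q^k)\backslash\mathcal{P}$. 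By the definition of $\alpha_q^l(k,r)$ this union has at least $\alpha_q^l(k,r)$ points, whence $n=|\mathcal{P}|\le\gaussm{k}{1}{q}-\alpha_q^l(k,r)\le\gaussm{k}{l}{q}-\alpha_q^l(k,r)$, contradicting the hypothesis.

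I do not expect a real obstacle here; the proof is a direct transcription of Proposition~\ref{prop_bound_1}. The only two points that deserve a moment's care are (i) establishing the converse of Lemma~\ref{min_subcode}, namely that a non-support-minimal $D_i$ actually admits such a $U_i$ (the analogue for subcodes of the paragraph just before Proposition~\ref{prop_bound_1}), and (ii) keeping the two Gaussian binomials straight: $\gaussm{k}{l}{q}$ counts the $l$-dimensional subcodes and hence governs $M^l(C)$, whereas the length $n=|\mathcal{P}|$ is bounded against the number $\gaussm{k}{1}{q}$ of points of $PG(\mathbb{F}_q^k)$, so one uses $\gaussm{k}{l}{q}\ge\gaussm{k}{1}{q}$ (valid for $1\le l\le k-1$) to close the contradiction in the stated form.
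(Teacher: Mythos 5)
Your proof is correct and follows exactly the route the paper intends: the paper's own proof of Proposition~\ref{prop_bound_3} consists of the single line ``the proof is similar to Proposition~\ref{prop_bound_1}'', and your contrapositive argument is precisely that transcription, including the correct extraction of the co-dimension-$(l+1)$ subspaces $U_i$ from Lemma~\ref{min_subcode}. Your point (ii) --- that the length is naturally bounded by $n\le\gaussm{k}{1}{q}-\alpha_q^l(k,r)$ and one then needs $\gaussm{k}{l}{q}\ge\gaussm{k}{1}{q}$ (valid for $1\le l\le k-1$, with $l=k$ degenerate as you note) to recover the inequality in the stated form --- is a detail the paper glosses over, and it in fact shows the hypothesis could be sharpened to $n>\gaussm{k}{1}{q}-\alpha_q^l(k,r)$.
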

\begin{proof}
The proof is similar to \ref{prop_bound_1}.
\end{proof}

\section{Concluding remarks}

We presented a geometric characterization of minimal codewords in linear codes.  We then applied this characterization to the problem of finding the minimum number of minimal codewords of projective $[n,k]_q$ codes.  We obtain a new lower bound and in some cases, we were able to determine exact values that were not known before.  We also extended the techniques to the study of subcodes with minimal supports. 

\section*{Acknowledgments}

R. dela Cruz gratefully acknowledges the support of the Alexander von Humboldt Foundation and the Department of Mathematics, University of Bayreuth.

\medskip

\end{document}